\newtheorem{thm}{Theorem}[section]
\newtheorem{lem}[thm]{Lemma}
\newtheorem{cor}[thm]{Corollary}
\begin{document}

\title{On the cop number of generalized Petersen graphs}

\author[T. Ball]{Taylor Ball}
\email{tarball@indiana.edu}

\author[R.W. Bell]{Robert W. Bell}
\email{rbell@math.msu.edu}

\author[J. Guzman]{Jonathan Guzman}
\email{jonrguz@gmail.com}

\author[M. Hanson-Colvin]{Madeleine Hanson-Colvin}
\email{mhansoncol@brynmawr.edu}

\author[N. Schonsheck]{Nikolas Schonsheck}
\email{schonsheck.2@osu.edu}

\begin{abstract}
We show that the cop number of every generalized Petersen graph is 
at most 4.  The strategy is to play a modified game of cops and
robbers on an infinite cyclic covering space where the objective is
to capture the robber or force the robber towards an end
of the infinite graph.  We prove that finite isometric
subtrees are 1-guardable and apply this to determine the exact cop 
number of some families of generalized Petersen graphs.  We also extend
these ideas to prove that the cop number of any connected I-graph is
at most 5.
\end{abstract}

\maketitle

\section{Introduction}

The game of cops and robbers on graphs was introduced by 
Quilliot~\cite{Quilliot_MR703252} and, indpendently, by 
Nowakowski and Winkler~\cite{Nowakowski_Winkler_MR685631}.  
The game is played as follows.  One player, the cop player, is given 
a collection of $k$ pawns called \emph{cops}.  She assigns each cop to 
a vertex of a given undirected graph $G$.  A second player, the robber 
player, is given a single pawn called a \emph{robber}.  He assigns the 
robber to a vertex of $G$.  The players alternate turns, with the cop 
player going first.  On a turn, a player may move any number of her or his 
pawns, by moving each to an adjacent vertex or \emph{passing} by remaining 
at the same vertex.  If, after either player's move, a cop and the robber 
are at the same vertex, the robber is \emph{captured} and the cop player 
wins.  The \emph{cop number}, $c(G)$, of a graph $G$ is the least positive 
integer $k$ such that $k$ cops suffice to capture the robber in a finite
number of moves.  In this game, both players are assumed to have complete 
information about the graph and the positions of the pawns.  
Bonato and Nowakowski~\cite{Bonato_Nowakowski_MR2830217} 
have written a text which introduces and surveys many of the 
foundational papers on the game of cops and robbers on graphs.

The cop number of a graph is computationally expensive to compute.  An 
algorithm described by Bonato and 
Chiniforooshan~\cite{Bonato_Chiniforooshan_MR2648337} will
check whether or not $k$ cops suffice to win on a given 
graph $G$; however, the algorithm runs in $O(n^{3k+3})$ time, where $n$ is 
the order of the graph. 

There are a number of results on bounds for the cop 
number in terms of a graph invariant.  For example, 
Aigner and Fromme~\cite{Aigner_Fromme_MR739593} 
proved that if the minimum degree of a graph $G$ is $\delta$ and if
$G$ has girth least five, then $c(G) \geq \delta$.  
Frankl~\cite{Frankl_MR890640} generalized this as follows: 
for each integer $t \geq 1$, if the minimum degree of a graph $G$ is 
$\delta \geq 2$ and $G$ has girth at least $8t - 3$, then 
$c(G) > (\delta-1)^t$.  


The present article will establish bounds for the cop number
of generalized Petersen graphs.  Let $n$ and $k$ be a positive integers 
such that $n \geq 5$ and $1 \leq k < n/2$.  The 
\emph{generalized Petersen graph}, $GP(n,k)$, is the undirected graph 
having vertex set $A \cup B$, where $A = \{a_1, \dots, a_n\}$ and
$B = \{b_1, \dots, b_n\}$, and having the following edges: 
$(a_i, a_{i+1})$, $(a_i, b_i)$, and $(b_i, b_{i+k})$ for 
each $i = 1, \dots, n$, where indices are to be read modulo $n$.  

In Section~\ref{S:Bound}, we prove the main result of this article:

\begin{thm}
The cop number of every generalized Petersen graph is less than or equal
to four.
\end{thm}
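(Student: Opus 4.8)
The plan is to move the game to the \emph{infinite cyclic cover} $\widetilde{G}$ of $G=GP(n,k)$, namely the graph with vertex set $\{\tilde a_i,\tilde b_i: i\in\mathbb Z\}$ and edges $\tilde a_i\tilde a_{i+1}$, $\tilde a_i\tilde b_i$, $\tilde b_i\tilde b_{i+k}$, together with the covering map $\pi\colon\widetilde G\to G$ and the $\mathbb Z$--action $\tilde a_i\mapsto\tilde a_{i+n}$, $\tilde b_i\mapsto\tilde b_{i+n}$ that unwinds the rotational symmetry of $GP(n,k)$. This $\widetilde G$ has two ends, and the ``position'' function $x(\tilde a_i)=x(\tilde b_i)=i$ is Lipschitz along edges, so the two ends are recorded by $x\to+\infty$ and $x\to-\infty$. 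The objective of the \emph{modified game} on $\widetilde G$ is: either capture the robber, or force the robber's position monotonically toward one fixed end.

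The reduction from $G$ to this modified game is the first step I would carry out. Dedicate one cop to guard a finite \emph{isometric} subtree $T$ of $G$ chosen so that $T$ meets the outer cycle and every inner cycle of $GP(n,k)$; by the $1$--guardability of finite isometric subtrees, after finitely many moves this cop forces the robber never to enter $T$. Since $T$ hits every ``winding'' cycle of $G$, the preimage $\pi^{-1}(T)$ cuts $\widetilde G$ into fundamental slabs of bounded $x$--diameter. The remaining cops now simulate the modified game on $\widetilde G$: they fix a lift of the robber's initial vertex and lift each of his moves; because the real robber stays out of $T$, the simulated robber stays out of $\pi^{-1}(T)$ and is therefore confined to a single slab, so its position $x$ is bounded. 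A strategy that forces ``capture or $x\to\infty$'' must then terminate in capture, and projecting that capture down to $G$ wins. Thus $c(G)$ is at most one plus the number of cops needed to win the modified game on $\widetilde G$, and the claim becomes: three cops win the modified game on $\widetilde G$.

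For the modified game on $\widetilde G$ I would maintain a guarded \emph{barrier}: a finite isometric subtree (guarded by a single cop) whose deletion separates $\widetilde G$ into a left part and a right part with the robber confined to the right. Two further cops then repeatedly \emph{advance} the barrier one index to the right: the guarding cop shifts to the translated barrier while a helper cop temporarily covers the single vertex that the shift momentarily exposes, exactly as one does when sliding a guarded rung along the infinite ladder in the $k=1$ case. Each advance increases the robber's position by at least one, so in finitely many moves the robber is either squeezed against the far wall of its slab and captured, or driven to the end as required. Accounting: one cop for the barrier, one to advance it, one helper --- three on $\widetilde G$ --- plus the cop guarding $T$ in $G$, for a total of four.

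The main obstacle is constructing the required isometric subtrees, both $T\subseteq G$ and the barrier in $\widetilde G$. The naive candidate --- an arc of the outer cycle together with the spokes needed to touch all $k$ inner lines --- fails to be isometric once $k$ is large, because an inner edge skips $k$ positions while a two--step outer detour $\tilde a_{m}\tilde a_{m+1}\tilde b_{m+1}$ hops to a neighbouring inner line, producing short cuts of bounded length across a barrier of unbounded width. So the barrier must be a tree that zig--zags along genuine geodesics through the outer line and the $k$ inner lines while still separating, and the delicate point is to do this with a tree that one cop can guard --- i.e. keeping the number of trees, hence cops, bounded independently of $k$ --- and likewise to verify that the ``advance'' maneuver needs only a bounded helper team. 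I would also expect the degenerate cases ($k=1$, very small $n$, and large $\gcd(n,k)$, where the inner part splits into many cycles) to require separate, more hands--on arguments.
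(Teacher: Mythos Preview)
Your plan diverges from the paper's and contains a gap that you yourself flag but do not close. The entire approach rests on the existence of (i) a finite isometric subtree $T\subseteq GP(n,k)$ meeting the outer cycle and every inner cycle, and (ii) an isometric separating subtree of $GP(\infty,k)$ that a single cop can guard and that can be ``slid'' one step to the right with a bounded helper team. You correctly observe that the naive candidate fails once $k\ge 4$: if $T'$ contains $b_0,a_0,a_1,\dots,a_{k-1},b_{k-1}$ then $d_{T'}(b_0,b_{k-1})=k+1$ while in $GP(\infty,k)$ one has $b_0\!-\!b_k\!-\!a_k\!-\!a_{k-1}\!-\!b_{k-1}$ of length $4$, so $T'$ is not isometric. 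You offer no replacement, and it is far from clear that any isometric separating tree exists for general $k$; the paper in fact uses exactly this tree idea, but only for $k=3$, where the computation above gives $4=4$ and the tree \emph{is} isometric. Moreover, even granting a barrier, your ``advance'' step---one cop shifts the guarded tree by one index while a helper covers the single exposed vertex---is modeled on the $k=1$ ladder and is not justified when the barrier is a tree on $k{+}1$ or more vertices: a unit shift can expose several leaves simultaneously and the guarding cop cannot instantly re-establish the guarding condition on the translated tree.

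The paper's proof avoids barriers entirely. The missing idea is that a single cop can \emph{shadow} the robber without any separating subgraph: since a move of $R$ changes his index's residue modulo $k$ by $0$ (any move involving $B$, or a pass) or $\pm 1$ (a move inside $A$), a cop walking along $A$ can in at most $k/2$ steps match $R$'s residue modulo $k$, then in one more step match his parity (i.e.\ lie in the same subgraph $A$ or $B$), and thereafter mirror so as to maintain both while never increasing distance. With a second cop walking steadily to the right along $A$, the pair forces $R$'s index to increase without bound in $GP(\infty,k)$ (or captures him). Two more cops run the mirror-image strategy from the right, forcing the index to decrease without bound. The two forcings are incompatible, so some lift of a cop captures some lift of $R$, and the projection gives a capture in $GP(n,k)$. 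No isometric trees, no slab decomposition, and nothing depends on the size of $k$.
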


It is immediate that generalized Petersen graphs are 3-regular, and it
is straightforward to check that the generalized Petersen graph $G(n,k)$ 
has girth at least five if and only if $k \neq 1$ and $n \neq 3k, 4k$. 
For these graphs, the bounds of Aigner, Fromme, and Frankl establish that 
the cop number of a generalized Petersen graph is at least three.  

By implementing the algorithm of Bonato and Chiniforooshan, we have 
confirmed that there are generalized Petersen graphs with cop number
greater than 3; and so, by the theorem above, these graphs have cop number
equal to 4.  

We address the problem of determining the exact cop number of generalized 
Petersen graphs in Section~\ref{S:Exact}.  A variety of ad hoc techniques 
are used; however, one technique which may be of more general interest is
the following (see Section~\ref{S:isometric}):

\begin{thm}
If $T$ is a finite isometric subtree of a graph $G$, then $T$ is 
1-guardable.
\end{thm}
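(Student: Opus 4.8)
The plan is to exhibit an explicit strategy for a single cop which, after finitely many initial moves, forever prevents the robber from occupying a vertex of $T$ without being immediately captured. The crucial object is, for a vertex $u$ of $G$, its distance profile to $T$: the function $f_u\colon V(T)\to\mathbb{Z}_{\ge 0}$ given by $f_u(t)=d_G(u,t)$. Since $T$ is isometric, $d_G$ and $d_T$ agree on $V(T)$, so I may freely work with balls $B_T(t,\rho)$ computed inside the tree $T$. Call a vertex $c\in V(T)$ a \emph{guard for $u$} if $d_T(c,t)\le f_u(t)$ for every $t\in V(T)$, i.e.\ if $c$ lies in $\operatorname{Guard}(u):=\bigcap_{t\in V(T)}B_T(t,f_u(t))$. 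The point of the definition: if the cop sits on a guard for the robber's current vertex $r$, the robber cannot move onto $T$ and survive, for were the robber to move to $t\in V(T)$ we would have $f_r(t)\le 1$ before the move, hence $d_T(c,t)\le 1$, hence $d_G(c,t)\le 1$, and the cop captures on the next move.

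First I would check that guards always exist and behave well. Each $B_T(t,f_u(t))$ is a subtree of $T$, and any two of them intersect, since $d_T(t,t')=d_G(t,t')\le d_G(u,t)+d_G(u,t')=f_u(t)+f_u(t')$; by the Helly property for subtrees of a tree, $\operatorname{Guard}(u)$ is a nonempty subtree of $T$. Moreover $\operatorname{Guard}(u)=\{u\}$ whenever $u\in V(T)$, as one sees by taking $t=u$ in the defining inequality. The same Helly argument, now also including the subtree $B_T(c,1)=N_T[c]$, yields the key \emph{followability} statement: if $c$ is a guard for $u$ and $u'$ is adjacent to $u$, then $N_T[c]\cap\operatorname{Guard}(u')\neq\emptyset$, because $d_T(c,t)\le f_u(t)\le f_{u'}(t)+1$ for every $t$. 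Consequently, once the cop occupies a guard for the robber's position, after each robber move the cop can step (inside $T$) to a guard for the robber's new position $u'$; and if $u'\in V(T)$, then $N_T[c]\cap\operatorname{Guard}(u')=N_T[c]\cap\{u'\}$ forces $u'\in N_T[c]$, so the cop captures instead. Thus the invariant ``the cop is on a guard for the robber'' is self-sustaining and wins.

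It remains to move the cop into position from an arbitrary start. The cop first walks along a shortest path in $G$ onto $V(T)$, which takes finitely many moves while the robber moves freely. Thereafter the cop pursues the set $\operatorname{Guard}(r)$ inside the tree $T$, moving one step closer to it on each turn. The same Helly computation that gave followability also shows $\operatorname{Guard}(r')\subseteq N_T[\operatorname{Guard}(r)]$ and $\operatorname{Guard}(r)\subseteq N_T[\operatorname{Guard}(r')]$ whenever $r$ and $r'$ are adjacent, so the target region drifts through $T$ with ``speed'' at most one per robber move. A single pursuer on a finite tree can always run down such a slowly moving region: the $T$-distance from the cop to $\operatorname{Guard}(r)$ never increases, and it cannot remain positive forever, by the usual cornering argument for trees (a region that maintains its distance from the pursuer is being driven toward a leaf and must eventually turn back, at which point the cop strictly gains). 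As soon as this distance reaches $0$ the cop is on a guard, and the previous paragraph closes out the game.

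The step I expect to be the main obstacle is this last one — making the pursuit rigorous. A region moving at speed one on a tree can hold a fixed distance from the pursuer for many consecutive moves, so a bare ``move toward it'' rule does not obviously terminate; one needs a genuine potential function (or the standard dismantling/cornering argument used to prove trees are cop-win) to certify that the cop closes the gap within a bounded number of moves. By contrast, the existence and followability of guards are clean consequences of the Helly property for subtrees, once the isometry of $T$ is used to identify $d_G$ with $d_T$ on $V(T)$.
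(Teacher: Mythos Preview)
Your framework via the Helly property for subtrees is correct and gives a clean account of the maintenance phase: the set $\operatorname{Guard}(u)$ is nonempty, and your followability lemma is exactly the statement that once the cop occupies a guard she can re-establish that status after every robber move. This is equivalent to the paper's second part (preserving $(GC)$), though the paper argues it directly from the component structure of $T-\{c\}$ rather than through Helly.

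Where the two approaches genuinely diverge is the initial phase, precisely the step you flag as the main obstacle. The paper sidesteps the ``chase a speed-one region'' difficulty altogether. Its first lemma says that if the cop at $c$ is not yet a guard, then every vertex $v\in V(T)$ violating $d_T(c,v)\le d_G(r,v)$ lies in a \emph{single} component $U$ of $T-\{c\}$; the cop simply steps into $U$. The second lemma shows that after this step, and after the robber's reply, the new violating vertices lie in a component $U'\subsetneq U$. One obtains a strictly decreasing chain $U_0\supsetneq U_1\supsetneq\cdots$ of subsets of the finite set $V(T)$, which gives termination immediately, with no cornering or potential-function argument needed. In your language: rather than chasing $\operatorname{Guard}(r)$, the paper tracks the set of vertices witnessing that the cop is \emph{not} in $\operatorname{Guard}(r)$ and shows that set shrinks monotonically. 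This is a more direct resolution of your acknowledged gap than invoking cop-winness of trees against a moving subtree (which, to be made rigorous, would still require you to extract a single virtual robber from $\operatorname{Guard}(r)$ via your own followability lemma and then cite the standard tree strategy). Your Helly formulation is more conceptual and would port more readily to other settings, but for this particular theorem the paper's component argument is shorter and self-contained.
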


In Section~\ref{S:I-graphs}, we generalize our results to $I$-graphs. 
If $n \geq 5$ and $0 < j,k < n/2$, the \emph{$I$-graph} $I(n,j,k)$ has 
vertex set $A \cup B$ and has the following edges:
$(a_i, a_{i+j})$, $(a_i,b_i)$, $(b_i, b_{i+k})$ for each $i = 1, \dots, n$.
Thus, $I$-graphs are like generalized Petersen graphs with two parameters: 
one for the $A$-vertices and one for the $B$-vertices; in particular,
setting $j=1$, we see that $I(n,1,k) = GP(n,k)$.

\begin{thm}
The cop number of every connected $I$-graph is less than or equal to five.
\end{thm}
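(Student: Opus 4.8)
The plan is to reduce, as far as possible, to the generalized Petersen case, and then to treat the remaining $I$-graphs by a refinement of the covering-space strategy of Section~\ref{S:Bound}.

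First I would record two elementary isomorphisms of $I$-graphs. Exchanging the roles of the outer and inner cycles gives $I(n,j,k)\cong I(n,k,j)$, and relabelling $a_i\mapsto a_{ui}$, $b_i\mapsto b_{ui}$ for a unit $u\in(\mathbb{Z}/n)^{\times}$ gives $I(n,j,k)\cong I(n,\, u^{-1}j\bmod n,\, u^{-1}k\bmod n)$ (up to the usual normalization of the parameters into $(0,n/2)$, for which $I(n,j,k)$, $I(n,n-j,k)$ and $I(n,j,n-k)$ are literally the same graph). Because a connected $I$-graph has $\gcd(n,j,k)=1$, the integer $g=\gcd(j,k)$ is a unit modulo $n$; dividing both parameters by $g$ via the second isomorphism reduces to the case $\gcd(j,k)=1$. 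If in addition $\gcd(n,j)=1$, then taking $u\equiv j\pmod n$ gives $I(n,j,k)\cong I(n,1,k')$ with $k'=kj^{-1}\bmod n$; since $k\notin\{0,n/2\}$ one checks $k'\notin\{0,n/2\}$, so after replacing $k'$ by $n-k'$ if needed this is a generalized Petersen graph and its cop number is at most $4$ by the main theorem of Section~\ref{S:Bound}. By the symmetry $I(n,j,k)\cong I(n,k,j)$ the same holds when $\gcd(n,k)=1$. Thus everything reduces to the connected $I$-graphs with $\gcd(j,k)=1$, $d_1:=\gcd(n,j)>1$ and $d_2:=\gcd(n,k)>1$ (note $\gcd(d_1,d_2)\mid\gcd(j,k)=1$); for these the outer vertices span $d_1$ disjoint cycles and the inner vertices span $d_2$ disjoint cycles.

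For such a graph I would imitate Section~\ref{S:Bound}: pass to the infinite cyclic cover $\widetilde{I}$ with deck group generated by the shift $i\mapsto i+n$, in which the outer part is a disjoint union of $j$ bi-infinite paths and the inner part a disjoint union of $k$ bi-infinite paths; since $\gcd(j,k)=1$ this cover is connected with exactly two ends. On $\widetilde{I}$ one plays the modified game---capture the robber, or drive his lifted position toward one end---invoking the $1$-guardability of finite isometric subtrees (Section~\ref{S:isometric}) for the finite isometric paths that occur. The only structural change from the generalized Petersen cover is that the single outer line becomes $j$ parallel outer lines; the fifth cop is spent precisely to absorb this difference, being held on a carefully chosen outer vertex so that, from the robber's viewpoint, the outer part again behaves like a single line, while the other four cops run the Section~\ref{S:Bound} sweep. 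Projecting the play back to $I(n,j,k)$, the robber is either captured or forced once around the cyclic direction into the region pinned down by the stationary cop, where the sweeping cops finish.

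The main obstacle is the claim in the previous paragraph that a \emph{single} extra cop---not a number growing with $d_1$ or $d_2$---suffices to turn the multi-line outer structure of $\widetilde{I}$ into the single-line structure handled in Section~\ref{S:Bound}. One must choose the frozen outer vertex so that the robber can never exploit it as a cut vertex, verify that guarding it leaves a configuration on which the four-cop sweep of Section~\ref{S:Bound} applies essentially verbatim, and check that the induced strategy on the finite graph $I(n,j,k)$ actually terminates, i.e.\ that the robber cannot evade indefinitely by circulating. I expect this verification, rather than any genuinely new idea, to be the heaviest part of the argument.
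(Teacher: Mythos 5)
Your reduction steps are fine (and in fact cleaner than the paper's treatment of the non-coprime case): multiplying indices by the unit $\gcd(j,k)^{-1}$ modulo $n$ does give $I(n,j,k)\cong I(n,j/g,k/g)$, and when $\gcd(n,j)=1$ the further relabelling shows the graph is a generalized Petersen graph, so Theorem~\ref{T:GP4} disposes of those cases with four cops. But the heart of the theorem is exactly the case you leave open, and there your proposal has a genuine gap rather than a deferred verification. The claim that one extra cop, frozen on a ``carefully chosen outer vertex,'' makes the $j$ parallel outer lines of the cover behave like the single outer line of $GP(\infty,k)$ is not merely heavy to check --- it is unsupported and implausible as stated. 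A single guarded vertex is not a cut vertex of $I(\infty,j,k)$ (the outer lines are cross-connected through the spokes and inner lines at every index), so guarding it separates nothing, and the robber can circulate among the outer lines without ever approaching it. Moreover, the four-cop sweep of Section~\ref{S:Bound} cannot be run ``essentially verbatim'': the index-forcing argument of Theorem~\ref{T:Weak_cop_bound} and Corollary~\ref{C:Force_right} uses that an outer step changes the index by exactly $1$, so that (i) two cops walking the outer line scan all residues modulo $k$, and (ii) the robber changes his residue modulo $k$ by at most one per move. When outer steps change the index by $j>1$, both of these break, so the strategy you want your four cops to execute does not exist without modification --- and supplying that modification is the actual content of the theorem for these graphs.

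The paper's proof is structured differently at precisely this point. For $\gcd(j,k)=1$ it does not try to reduce to the $GP$ sweep at all: two cops are maneuvered to indices congruent to the robber's modulo $k$ (one with smaller, one with larger index), two more to indices congruent modulo $j$ (again one on each side), all four thereafter mirroring the robber on whichever of $\pi^{-1}(A)$, $\pi^{-1}(B)$ he occupies; any non-passing, non-subgraph-switching move of the robber lets one of the four close distance while the other three maintain it, and the fifth cop is used not as a stationary separator but actively, to prevent the robber from stalling by passing or perpetually switching between the inner and outer parts. The general connected case is then handled by choosing the lead cops in the robber's component of the (possibly disconnected) cover --- a step your unit-multiplication isomorphism replaces more elegantly. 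So your outer reductions could be grafted onto the paper's argument, but as written your proposal is missing the strategy that actually traps the robber in the cover.
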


Acknowledgement:
We are thankful for encouragement and support from the participants in the
2014 summer research experience for undergraduates at Michigan State 
University where this research was conducted.  We are grateful for 
support from the National Security Agency, the National Science 
Foundation (NSF grant \#DMS-1062817), and Michigan State University.

\section{Infinite cyclic coverings of generalized Petersen graphs}

We define an infinite analogue of a generalized Petersen graph for each 
positive integer $k$.  Let $A = \{a_i \in \mid \mathbb{Z}\}$, 
$B = \{b_i \mid i \in \mathbb{Z}\}$. 
The infinite graph $GP(\infty, k)$ has vertex set $A \cup B$
and has the following edges: $(a_i, a_{i+1})$, $(a_i,b_i)$, and 
$(b_i,b_{i+k})$ for each $i \in \mathbb{Z}$. 
There is a graph homomorphism $\pi: GP(\infty,k) \to GP(n,k)$ given by 
reducing the index of each vertex modulo $n$.  This map induces a regular 
covering map of the geometric realizations of these graphs.  More 
precisely, there is a $\mathbb{Z}$--action on $GP(\infty, k)$ defined as 
follows.  Let $\tau$ be a choice of generator of $\mathbb{Z}$, let
$n \geq 5$, and define $\tau.a_{i} = a_{i+n}$ and $\tau.b_{i} = b_{i+n}$.  
Then $\tau$ extends uniquely to an automorphism of $GP(\infty, k)$ and the 
orbit space is isomorphic to $GP(n,k)$.

The discussion below holds for any covering space
$\pi: \widehat{G} \to G$ of graphs.  The reader may prefer to 
concentrate on the special case where $\widehat{G} = GP(\infty,k)$ and 
$G = GP(n,k)$ for some fixed choices of $n$ and $k$.

Suppose that $X$ is a pawn assigned to a vertex $v$ of $G$.
Let $\pi^{-1}(X) = \{X_w \mid w \in \pi^{-1}(v)\}$ be a set of 
pawns in one-to-one correspondence with the set of pre-images of $v$.
We assign the pawn $X_w$ to the vertex $w$ of $\widehat{G}$.  Suppose 
that $X$ moves from 
$v$ to $v' \neq v$ in $G$ and let $e = (v,v')$ be the corresponding 
edge (oriented from $v$ to $v'$).  There is a unique \emph{lifted move} 
for each $X_w \in \pi^{-1}(X)$: the pawn $X_w$ moves from $w$ to $w'$ 
where $w'$ is the unique endpoint of the lift at $w$ of the edge $e$.  
For example, in $GP(n,k)$, the edge $(a_{n-1},a_n = a_{0})$ will lift at 
$a_{n-1}$ to $(a_{n-1}, a_n)$ in $GP(\infty,k)$; and it will lift at 
$a_{2n-1}$ to $(a_{2n-1}, a_{2n})$, etc.  If $X$ passes, then so does 
each pawn in $\pi^{-1}(X)$.  

Thus, each move of a pawn in $G$ defines a unique move for each pawn 
in its pre-image.  Conversely, a move of any one pawn in $\pi^{-1}(X)$ 
defines a move for $X$ by projecting this move via $\pi$ 
from $\widehat{G}$ to $G$.  We refer to this interplay informally as 
the ``lifted game''.

Suppose that $C$ is a cop on $v \in V(G)$ and $C' \in \pi^{-1}(C)$ is
a cop on $w \in \pi^{-1}(v)$.   A move $e = (w,w')$ by $C'$ defines a 
move for every cop in $\pi^{-1}(C)$ as follows: each $C'' \in \pi^{-1}(C)$
plays the move defined by lifting the edge $\pi(e)$ at the vertex 
which $C''$ occupies.  Thus, a move by one
$C' \in \pi^{-1}(C)$ defines a unique consistent move for every cop in
$\pi^{-1}(C)$, where consistency means that each of these moves projects
to the same move for $C$. When the cops in $\pi^{-1}(C)$ move in this way, 
we refer to $\pi^{-1}(C)$ as a \emph{squad} and say that these cops 
\emph{move as a squad} consistent with the moves of a chosen 
\emph{lead cop} $C'$.  

When each pre-image of a cop or a robber plays as a squad, there is no 
difference between the game played on $\widehat{G}$ and the game played 
on $G$.  The purpose of playing the lifted game is to reveal strategies 
which may not be apparent when one studies only the structure of $G$.  
The fundamental observation is that a sequence of moves in $\widehat{G}$
which results in the capture of any lift of the robber by any lift of a 
cop projects to a sequence of moves in $G$ which results in a capture of 
the robber.

We say that the \emph{weak cop number} of a covering of graphs 
$\pi:\widehat{G} \to G$ is less than or equal to $m$ if $m$ squads 
playing on $\widehat{G}$ can capture a single robber or force him 
to move arbitrarily far away from a fixed choice of a base vertex. 
This definition is independent of the base vertex if $\widehat{G}$ is
connected.


If $\widehat{G}$ is a finite sheeted covering of $G$, then the weak
cop number is equal to the cop number of $G$.  But if $G$ is an infinite 
sheeted cover, the weak cop number can be strictly less than the cop 
number.  For example, for each positive integer $k$, the infinite path 
having vertices $\mathbb{Z}$ and edges 
$\{(n, n+1) \mid n \in \mathbb{Z}\}$ covers the $k$-cycle by reducing 
each vertex modulo $k$.  This covering space has weak cop number 1; but,
for each $k \geq 4$, the $k$-cycle has cop number 2.

It is straight-forward to establish that 
the cop number of $G$ is greater than or equal to the weak cop number
of a covering $\pi: \widehat{G} \to G$ and equality holds if and only if 
the squads have a capture strategy for any single robber in $\widehat{G}$.

The notion of a weak cop number was introduced by Chastand, Laviolette, 
and Polat~\cite{Chastand_Laviolette_Polat_MR1781285}.  Lehner~\cite{Lehner},
in a recent preprint, argues in favorite of the following definition
which is similar to the one used here: a graph $G$ is weakly copwin
if a cop can either capture a robber or prevent him from visiting
any vertex infinitely often.  

\begin{thm} \label{T:Weak_cop_bound}
For each generalized Petersen graph $GP(n,k)$, the weak cop number of 
$\pi: GP(\infty,k) \to GP(n,k)$ is 2.  
\end{thm}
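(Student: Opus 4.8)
The statement breaks into two halves: that one squad is not enough (weak cop number $\geq 2$) and that two squads are enough (weak cop number $\leq 2$). Throughout, the guiding picture is that $GP(\infty,k)$ is a ``doubly infinite band'': being a $\mathbb{Z}$--cover of a finite graph it has exactly two ends, and any vertex set separating one end from the other must meet the outer path $\{a_i\}$ and must meet each of the $k$ bi-infinite inner paths $\{b_i : i\equiv\rho \pmod{k}\}$, so the narrowest ``cross section'' has $k+1$ vertices. The deck transformation $\tau$ shifts all indices by $n$, so the cops of a single squad always sit $n$ steps apart along whichever track (the outer path, or a single inner path) they occupy at a given moment.

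For the lower bound I would argue that one squad can never threaten enough of the band at once. Since the cops of a squad are the lifts of a single vertex of $GP(n,k)$, at every moment they all lie on the outer path of $GP(\infty,k)$ or they all lie on inner paths; either way there is a bi-infinite ``cop-free track'' --- an inner path in the first case, the outer path in the second --- on which the robber can settle. Consecutive squad cops on any track being $n$ apart, at most one of them presses the robber's current bounded window, and moving the lead cop from one track to the other displaces that cop by only one step, along a spoke, which cannot rob the robber of his buffer. So the robber evades indefinitely while remaining in a bounded neighborhood of a base vertex, and one squad can neither capture him nor force him to an end.

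For the upper bound the plan is to \emph{contain, then push}, using squad $1$ to confine the robber to a one-ended, essentially one-dimensional portion of the band and then using squad $2$ to drive him out that end or into a capture. For the containment step the tool at hand is the paper's lemma that a finite isometric subtree is $1$--guardable, applied to $\tau$--invariant families --- for instance a periodic family of spokes, and finite segments of a bi-infinite inner path (each inner path is isometric in $GP(\infty,k)$) --- assembled into a moving barrier that the robber cannot cross. Once the robber is pinned to the remaining half-band, which behaves like a half-line, squad $2$ can translate along it: circulating its lead cop once around the corresponding cycle of $GP(n,k)$ shifts every lift of squad $2$ by $n$, so the lift nearest the robber can be steered toward him, and since he can neither pass it nor retreat past squad $1$ his index is forced to $\pm\infty$, or he is captured against a second lift of squad $2$. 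The handful of small-parameter graphs, and those of girth less than five, I would treat by hand.

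The step I expect to fight hardest with is the containment. The obvious attempt --- have squad $1$ shadow the robber's projection onto the outer cycle of $GP(n,k)$ --- fails for $k\ge 2$: that cycle is not isometric, an inner edge moves the outer index by $k$, and a robber riding an inner path outruns any shadow confined to the outer path. Hence no single subgraph's guarding traps the robber outright, and the genuine work is to show that one squad can still maintain a leak-proof moving front across the $k+1$ relevant tracks while leaving squad $2$ free to push. I expect this to force a case analysis on $k$ and on $\gcd(n,k)$ --- the latter controls how many inner paths the lifts of a single squad can occupy simultaneously --- together with a termination argument for the transient maneuvers that set the front up.
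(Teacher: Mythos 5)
There is a genuine gap, and it sits exactly where you predicted: the containment step is never supplied, and the architecture you chose (one squad builds a barrier, one squad pushes) would not close even if it were. First, a barrier separating the two ends of $GP(\infty,k)$ must cross the outer path and all $k$ inner paths; the natural candidate is a tree such as $\{a_1,\dots,a_k,b_1,\dots,b_k\}$, and guarding it is a full-time job for one cop. That is precisely what the paper does later for $GP(n,3)$ --- but there it spends \emph{three} cops: one to guard the tree and \emph{two} to push. Second, and more fatally, your pushing phase asserts that the robber ``can neither pass'' the nearest lift of squad $2$. He can: by riding an inner path he changes his index by $k$ per move and vaults over a cop confined to the outer path (this is exactly the maneuver in Figures~\ref{F:jump1} and~\ref{F:jump2}). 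A lone pushing squad has no way to punish this, so the robber stays in a bounded window forever. You correctly diagnosed that naive shadowing of the outer-cycle projection fails because that cycle is not isometric, but you did not find the substitute invariant, and without it neither half of your plan works with only two squads.

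The paper's resolution is different in kind and avoids any barrier. The key observation is that the robber's index \emph{modulo $k$} changes by at most one per move, and only when he moves along the outer path; so one cop, $C_1$, can in at most $k/2$ turns acquire the robber's residue class mod $k$, then also match his ``side'' (the induced subgraph $A$ or $B$), and maintain both forever while always stepping toward the robber. The second cop, $C_2$, simply walks up the outer path increasing its index. Now the vault maneuver is no longer free: to drop his index below $C_2$'s the robber must travel along $B$ \emph{toward} $C_1$, losing $2k$ in distance to $C_1$ each time, and after each vault the squad $S_2$ re-elects a lead cop with index below the robber's. Hence vaults happen only finitely often, and the robber's index is forced upward without bound or he is caught by $C_1$. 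This potential argument (distance to the residue-tracking cop as the quantity that pays for each escape past the pusher) is the idea your proposal is missing; no case analysis on $k$ or $\gcd(n,k)$ is needed. Your lower-bound discussion is more detailed than the paper's one-line assertion and is in the right spirit, so the shortfall is entirely in the upper bound.
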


\begin{proof}
Let $\widehat{G} = GP(\infty,k)$ and $G = GP(n,k)$.  It is clear that
one squad is not sufficient since it is assumed that $n \geq 5$.  
Choose $a_0$ as the base vertex in $\widehat{G}$.
Assign two cops, $C_1$ 
and $C_2$, to $a_0$ in $G$.  This determines an assignment of two squads 
$S_1 = \pi^{-1}(C_1)$ and $S_2 = \pi^{-1}(C_2)$ to the vertices of 
$\widehat{G}$.  By abuse of notation, let $C_1$ and $C_2$ denote choices
of lead cops for $S_1$ and $S_2$, respectively, with both assigned to 
$a_0$ in $\widehat{G}$ on the first turn.  Let $R$ denote the robber which 
is assigned to some vertex in $\widehat{G}$.
As in the description of the
lifted game, the moves of each cop in a squad $S_i$ is determined by the 
moves of $C_i$.

The initial strategy of the cops is to move in such a way that, after 
finitely many moves, one cop, say $C_1$, occupies a vertex whose index is 
congruent modulo $k$ to the index of the vertex which the robber 
occupies.  Hereafter, we refer to the index of the vertex which a pawn
occupies as the \emph{index} of the pawn.  Since there are only finitely 
many residues modulo $k$, by moving $C_1$ from $a_0$ to $a_1$ to $a_2$, 
etc. and moving $C_2$ from $a_0$ to $a_{-1}$ to $a_{-2}$, etc., this is 
achieved in less than or equal to $k/2$ turns.  The important observation 
is that the robber can only change the residue modulo $k$ of his index by 
at most one: if $R$ moves within the subgraph $A$ induced by the vertex 
set $A$, then his residue changes by one if and only if he does not pass; 
if he moves within the subgraph $B$ induced by the vertex set $B$ or if he 
moves from a vertex of $A$ to a vertex of $B$ or vice-versa or if he 
passes, then his residue does not change at all.  

After possibly relabeling our cops and squads, we have that $C_1$'s index
is congruent modulo $k$ to $R$'s index. Caution is needed since the other 
cops in the squad $S_1$ need not have indices congruent to $R$'s index; 
so, our choice of lead cop is important for the squad $S_1$.

The next stage of the cops' strategy is to move $C_1$ to match parity with 
the robber in the sense that both occupy vertices in the same induced 
subgraph, either both in $A$ or both in $B$.  This can be achieved on the 
turn after $C_1$ has achieved a congruent index.  If $R$'s next move is 
to a vertex in $B$, then $C_1$ moves to the unique vertex of $B$ to which 
the cop is adjacent.  If $R$ moves instead to a vertex of $A$, then $C_1$ 
plays a move (possibly passing) in $A$ which maintains the congruence of
their indices.  (There is only one such move if $k > 1$.)  In either 
case, $C_1$ has maintained a congruent index and now matches parity 
with $R$.  

On subsequent turns, $C_1$ moves so that both congruence and parity are
maintained.  Without loss of generality, we may assume that the index of
$C_1$ is less than the index of $R$.  Whenever $R$ moves within $B$, $C_1$ 
has a choice of two moves; we declare that $C_1$ will always move towards
$R$, that is towards the vertex in $B$ with larger index.  If $R$ passes
in $B$ or moves towards $C_1$, then after $C_1$'s move the distance
between the two pawns has decreased.  

The final stage of the cops' strategy is for $C_2$ to move in $A$ 
towards $R$.  If $C_2$ has a higher index than $R$, then choose a new
lead cop, which we will again call $C_2$, for the squad $S_2$ so that the
index of $C_2$ is less than $R$'s index.   On each turn, $C_2$ moves in
$A$ by increasing his index.

To establish that the weak cop number is two, we prove that $R$'s 
index cannot remain bounded from above.  Whenever $R$ moves
to decrease his index or leave it unchanged, $C_2$ moves closer
or $C_1$ moves closer.  If $R$ succeeds in lowering his index below that
of $C_2$'s index (see Figures~\ref{F:jump1} and~\ref{F:jump2}), 
then to do so he must move in $B$ towards $C_1$.  Each time this happens, 
$C_1$ moves closer by $2k$.  Moreover, the squad $S_2$ at this point can 
simply choose a new lead cop, again called $C_2$, having index lower than
$R$.  Thus, $R$ can only evade $C_2$ in this way finitely many 
times.  Therefore, $R$ can only decrease his index or leave it unchanged 
finitely many times without being captured.
\end{proof}

\begin{figure} 
\includegraphics[scale=.75]{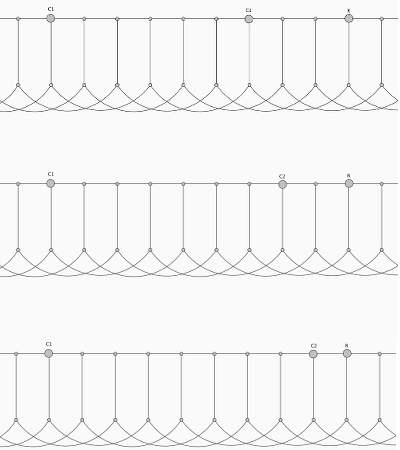}
\caption{As $C_2$ approaches $R$, the robber may have the opportunity
to lower his index below that of $C_2$ (see also Figure~\ref{F:jump2}).}

\label{F:jump1}
\end{figure}

\begin{figure} 
\includegraphics[scale=.75]{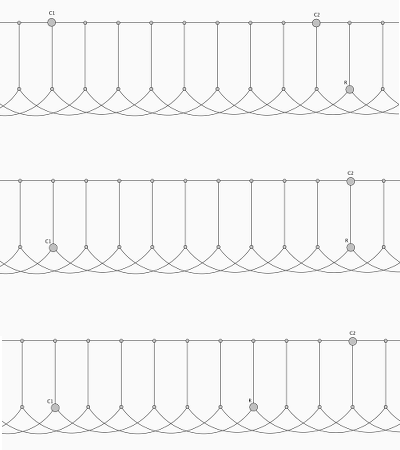}
\caption{To lower his index below that of $C_2$, $R$ allows $C_1$ to
decrease her distance to $R$.}
\label{F:jump2}
\end{figure}

\begin{figure}
\includegraphics[scale=.4]{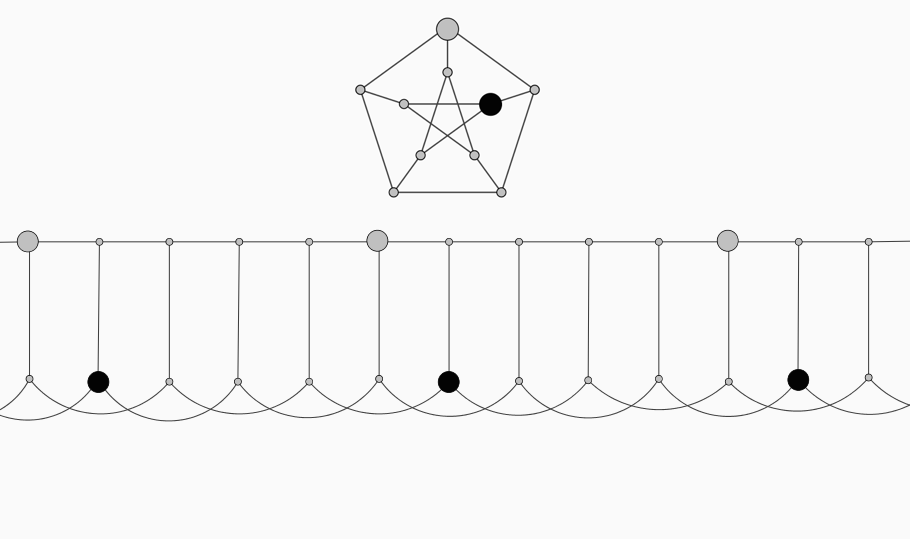}
\caption{A visualization of the lifted game.}
\end{figure}

The following corollary is a refinement of the above proof. It says
that we can, by carefully choosing a new lead cop in $GP(\infty, k)$, 
force the robber to move in a pre-determined direction.

\begin{cor} \label{C:Force_right}
Two squads playing in $GP(\infty, k)$ can capture a single robber or
force his index to increase without bound. 
\end{cor}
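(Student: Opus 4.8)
The plan is to run the very strategy from the proof of Theorem~\ref{T:Weak_cop_bound} and to extract from it the sharper conclusion we need. Recall the set-up there: after at most $k/2$ turns the lead cop $C_1$ of the squad $S_1$ sits at a vertex whose index is congruent modulo $k$ to $R$'s index; on the next turn $C_1$ also matches \emph{parity} with $R$, meaning both occupy vertices in the subgraph $A$ or both in $B$; thereafter $C_1$ preserves congruence and parity, keeps its index below $R$'s, and moves \emph{toward} $R$ whenever $R$ moves inside $B$, while the lead cop $C_2$ of $S_2$ stays in $A$ below $R$ and increases its index by one on every turn. The proof of Theorem~\ref{T:Weak_cop_bound} shows that, under this strategy, $R$ can leave his index unchanged or decrease it only finitely many times unless he is captured. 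That is exactly the assertion of the corollary once we recall that indices are integers and that a robber who is never again able to make such a move is a robber whose index strictly increases on every subsequent turn, and hence grows without bound.

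It is worth recording why the number of such moves is finite, since this is where the work lies. If $R$ makes a non-increasing move while keeping his index at least that of $C_2$, then the distance from $C_2$ to $R$ in $A$ decreases, or the $C_1$--$R$ distance decreases; the latter distance never increases, so it can decrease only finitely often. The only non-increasing move that drops $R$'s index below $C_2$'s is a step inside $B$ toward $C_1$, which cuts the $C_1$--$R$ distance by $2k$; after it, $S_2$ re-chooses as its lead cop a squad member lying in $A$ at an index below $R$'s --- one always exists, since the members of $S_2$ occupy indices forming an arithmetic progression with common difference $n$ --- and the pursuit resumes with the same invariants for $S_1$. Because $C_1$ never again falls behind $R$, each such $2k$ gain is permanent, so there are at most finitely many of these escapes, and between consecutive escapes only finitely many further non-increasing moves are possible before $C_1$ or $C_2$ captures $R$.

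Finally, to force the robber's index in whichever direction one prefers (so as to realize the ``pre-determined direction'' mentioned above), one applies the automorphism $\sigma$ of $GP(\infty,k)$ given by $\sigma(a_i) = a_{-i}$ and $\sigma(b_i) = b_{-i}$, which carries each edge $(a_i,a_{i+1})$, $(a_i,b_i)$, $(b_i,b_{i+k})$ to an edge; conjugating the strategy above by $\sigma$ forces $R$'s index to decrease without bound instead. I expect the only delicate point to be the bookkeeping in the second paragraph: one must verify that re-selecting a lead cop for $S_2$ leaves the invariants carried by $S_1$ undisturbed (it does, because the squads act independently and only the lead cop of $S_1$ must be held fixed to preserve the congruence), and that the $2k$ improvements to $C_1$'s pursuit are genuinely irreversible, so the count of escapes is bounded by the initial $C_1$--$R$ distance divided by $2k$.
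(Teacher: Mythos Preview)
Your proposal takes for granted the clause ``thereafter $C_1$ preserves congruence and parity, keeps its index below $R$'s''. In the proof of Theorem~\ref{T:Weak_cop_bound} this is a without-loss-of-generality reduction, not something the matching phase actually delivers: after at most $k/2+1$ moves the lead cop $C_1$ sits at an index of absolute value roughly at most $k/2$, while $R$ may have placed himself at, say, $a_{-100}$. In that case $C_1$ is \emph{above} $R$, and the theorem's pursuit then drives $R$'s index to $-\infty$, not $+\infty$. Thus the direction in which $R$ is pushed is determined by the robber's placement, not by the cops. Your reflection $\sigma$ only converts an ``increase'' strategy into a ``decrease'' strategy; it does not show that either prescribed direction can be forced in the first place.

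The paper's proof supplies exactly the step you ruled out. After $C_1$ attains congruent index $I$ and parity with $R$, the members of $S_1$ occupy the indices $\{I+qn \mid q\in\mathbb{Z}\}$. One cannot pick an arbitrary squad member as the new lead cop without spoiling the congruence modulo $k$; but the member at index $I-mkn$ (for any positive integer $m$) is simultaneously in the squad and congruent to $I$ modulo $k$, and for $m$ large it lies below $R$. Re-declaring that member to be the lead cop of $S_1$ places $C_1$ below $R$ with all invariants intact, after which the theorem's pursuit genuinely forces $R$'s index upward. Your final sentence asserts that the lead cop of $S_1$ ``must be held fixed to preserve the congruence''; the whole content of the corollary is that it \emph{may} be re-selected, provided the replacement is taken from the coarser arithmetic progression $I+kn\mathbb{Z}\subset I+n\mathbb{Z}$.
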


\begin{proof}
As in the proof of Theorem~\ref{T:Weak_cop_bound}, two squads $S_1$
and $S_2$ with lead cops $C_1$ and $C_2$, respectively, play against
a single robber $R$.  After at most $k/2 + 1$ turns, $C_1$'s index is 
congruent to $R$'s index and both have the same parity, but it may be the 
case that the index of $C_1$ is greater than $R$'s index.  Following the 
proof of the theorem, it is clear that the robber can be forced to 
decrease his index without bound.  But if we want to force the robber to 
increase his index without bound, we must select a new lead cop for $S_1$ 
which has an index congruent modulo $k$ and which is less than $R$'s 
index.  Since the indices of the cops in $S_1$ are in one-to-one 
correspondence with the elements of the set 
$\{qn + I \mid q \in \mathbb{Z}\}$, where $I$ is the index of $C_1$, we 
can choose a new lead cop corresponding to an index of the form $-mkn + I$ 
for a sufficiently large integer $m$.  Then, following the strategy in 
Theorem~\ref{T:Weak_cop_bound}, we have the desired result.
\end{proof}

\section{Bounding the cop number of the generalized Petersen graph}
\label{S:Bound}

Using the results of the previous section, we prove the main
theorem stated in the introduction.

\begin{thm} \label{T:GP4} The cop number of a generalized Petersen graph 
is less than or equal to four. 
\end{thm}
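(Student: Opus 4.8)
The plan is to lift the game to $GP(\infty,k)$ and use the two-squad machinery from the previous section together with two additional cops who play an ordinary game on $GP(n,k)$ itself. By Corollary~\ref{C:Force_right}, two squads $S_1, S_2$ in $GP(\infty,k)$ can either capture a lift of the robber — which projects to a capture in $GP(n,k)$ and we are done — or force the robber's index to increase without bound. Projected down to $GP(n,k)$, this means the two cops $C_1, C_2$ chase the robber so that he is perpetually forced to "advance" around the cycle of residues, i.e. his $A$-index (read modulo $n$) keeps increasing, wrapping around again and again. So after finitely many moves we may assume the robber is trapped into circulating: on every turn (after some point) he must move from $a_i$ or $b_i$ to a vertex whose index is $i$ or $i+1$ (with the $b$-edges $b_i b_{i+k}$ also available, but those were exactly the moves that, in the lifted game, cost him ground against $C_1$, so the forcing argument still pins down his gross motion around the cycle).

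Next I would bring in the two extra cops $C_3, C_4$ and have them exploit the fact that the robber's position is now confined to a predictable, slowly-sweeping window. Concretely: once the squad argument guarantees the robber's index increases without bound in the cover, in $GP(n,k)$ the robber is confined at each time $t$ to a bounded arc $\{a_i, b_i : i \in W_t\}$ where the window $W_t$ slides steadily forward around $\mathbb{Z}/n$. The third cop stations himself on the outer ($A$) cycle somewhere ahead of the sliding window and simply waits; because the window must keep advancing, it is eventually forced to collide with $C_3$'s position, so the robber is squeezed between $C_1, C_2$ behind him and $C_3$ ahead of him. The fourth cop handles the only remaining escape: when cornered, the robber can try to jump off the outer cycle onto the inner $B$-cycle via a spoke $a_ib_i$ or along an inner edge $b_ib_{i+k}$; $C_4$ is positioned to guard the spoke or intercept on the inner cycle, closing off that route. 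A clean way to organize $C_3$ and $C_4$ is via the $1$-guardability of isometric subtrees (the second theorem of the introduction): a short path consisting of a spoke together with a bit of each cycle is an isometric subtree, so one cop guards it, and that converts the "squeeze" into an actual capture.

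The steps, in order: (1) invoke Corollary~\ref{C:Force_right} to reduce to the case where the robber's index increases without bound in $GP(\infty,k)$, equivalently his position in $GP(n,k)$ is eventually confined to a forward-sweeping window; (2) translate "sweeping window" into a precise statement about which vertices the robber can occupy at each time, and verify that the two lead cops $C_1,C_2$ keep him inside it; (3) place $C_3$ on the outer cycle ahead of the window and argue the window must overtake him, producing a squeeze on the $A$-vertices; (4) use $C_4$ (guarding an isometric spoke-and-arc subtree, 1-guardable by the second theorem) to kill the escape onto the $B$-cycle; (5) conclude that the robber, unable to advance, retreat, or cross over, is captured — hence $c(GP(n,k)) \le 4$.

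The main obstacle I expect is Step~(2)–(4): the forcing in Corollary~\ref{C:Force_right} is stated in terms of the robber's $A$-\emph{index} growing, but the robber genuinely lives on both the inner and outer cycles, and the inner cycle has a different combinatorial geometry (it is a union of $\gcd(n,k)$ cycles of length $n/\gcd(n,k)$, traversed in steps of $k$). So I must be careful that "index increases without bound" really does confine the robber to a controllable region of $GP(n,k)$ and that the $b_ib_{i+k}$ edges — which let the robber make a large index jump in one move — do not let him slip past $C_3$ or $C_4$. Handling those inner-cycle jumps cleanly, and ensuring the squeeze actually closes rather than letting the robber oscillate forever in the gap between the rear cops and $C_3$, is the delicate part; this is presumably where the detailed case analysis of the paper's proof will go, and where the precise choice of where to park $C_3$ and which subtree $C_4$ guards must be pinned down.
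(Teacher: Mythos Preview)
You have overcomplicated the argument, and in doing so missed the one-line finish. The paper's proof is pure symmetry: Corollary~\ref{C:Force_right} says two squads in $GP(\infty,k)$ can force a chosen lift $\widehat{R}$ to have index tending to $+\infty$ (or else capture him). By the obvious reflection $i\mapsto -i$ of $GP(\infty,k)$, two \emph{other} squads can simultaneously force that same $\widehat{R}$ to have index tending to $-\infty$. These are contradictory unless $\widehat{R}$ is captured in the cover, and projecting via $\pi$ gives a four-cop capture in $GP(n,k)$. No sliding windows, no isometric-tree guarding, no case analysis --- all four cops play entirely through the lifted game.

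Your plan instead mixes the lifted strategy for $C_1,C_2$ with a direct finite-graph blocking strategy for $C_3,C_4$, and this is where the genuine gap lies. The ``sweeping window'' is not well-defined: Corollary~\ref{C:Force_right} only guarantees that the index of the fixed lift $\widehat{R}$ eventually exceeds any bound; it does \emph{not} give monotone progress, a bounded rate, or confinement of the robber's residue modulo $n$ to any short arc at a given time. More seriously, a cop $C_3$ parked on the outer cycle does not block passage, since a single $B$-edge changes the index by $k$ and the robber can overtake $C_3$'s residue entirely along the inner cycle; and a cop $C_4$ guarding a small isometric subtree (a spoke plus short arcs) does not separate $GP(n,k)$ for general $k$, so it does not close that escape. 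You correctly flagged exactly this obstacle in your last paragraph --- it is real, not merely a matter of tidying up cases. The cleanest repair is to drop the finite-graph blocking and have $C_3,C_4$ play the mirror of the $C_1,C_2$ strategy in the cover, which is the paper's proof.

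For context: the isometric-tree lemma \emph{is} used in the paper, but only to obtain $c(GP(n,3))\le 3$, where a six-vertex tree genuinely disconnects $GP(\infty,3)$; it plays no role in the general upper bound of four.
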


\begin{proof}
Fix $n$ and $k$ so that $G = GP(n,k)$ is a generalized Petersen graph. 
Let $\widehat{G} = GP(\infty,k) \to GP(n,k)$ be the associated regular 
covering space.  Two pairs of cops play on $G$ to 
capture a robber $R$.  This is achieved by having each cop play the 
projected moves of squads consisting of their pre-images which play the 
lifted game in $\widehat{G}$ against any single robber $\widehat{R}$ in 
the pre-image of $R$.  By Corollary~\ref{C:Force_right}, one pair 
of lifted cops can force $\widehat{R}$ arbitrarily far to the right, 
i.e. force $\widehat{R}$'s index to increase without bound.  A second 
pair of lifted cops can force $\widehat{R}$ arbitrarily far to the left. 
Thus, the four squads will capture $\widehat{R}$.  The covering map $\pi$ 
projects these moves to a capture strategy in $G$.
\end{proof}

There exist generalized Petersen graphs, such as $GP(40,7)$, which have
cop number 4.  That $GP(40,7)$ does not have cop number less than 4 was 
verified with assistance of a a computer.  A summary of our findings and 
more precise results are given in Section~\ref{S:Exact}.

\bigskip


\section{Guarding isometric trees} \label{S:isometric}
The results of this section are of independent interest.  We will will 
use these results to determine the exact value of the cop number of 
several familes of generalized Petersen graphs in Section~\ref{S:Exact}.

The distance, $d_G(u,v)$, between two vertices, $u$ and $v$, in a graph 
$G$ is the length of a shortest path in $G$ joining $u$ to $v$.  
A subgraph $H$ of $G$ is an \emph{isometric subgraph} if for any 
two vertices, $u$ and $v$, in $H$, $d_H(u,v) = d_G(u,v) = d_G(u,v)$

A collection of cops $\{C_i\}$ is said to \emph{guard} a subgraph $H$
of a graph $G$ if these cops occupy vertices of $H$, move only in $H$, and
can end each cop turn so that the following \emph{guarding condition} 
$(GC)$ holds: 
\begin{equation}
\tag{GC}
\forall v \in H, \, \exists C_i, \, d_H(v,C_i) \leq d_G(v,R),
\end{equation}
where $R$ refers to the position of the robber.  If $H$ is guarded by 
$\{C_i\}$ and the robber were to enter $H$, then $(GC)$ implies that the 
robber is immediately captured or captured on the next 
move by one of these cops.

A subgraph $H$ of a graph $G$ is \emph{$k$-guardable} if $k$ cops, 
$C_1, \dots, C_k$, can, after finitely many moves, arrange themselves so that
they guard $H$.  By guarding a subgraph, the cops effectively 
eliminate a portion of the larger graph that the robber can play on.  

Aigner and Fromme~\cite{Aigner_Fromme_MR739593}
showed that finite isometric paths are 1-guardable, and 
used this fact to prove that the cop number of any planar graph is less than 
or equal to three.  We generalize their result to finite isometric trees
below.

\begin{lem} \label{L:same_component}
Suppose that $T$ is an isometric subtree of a graph $G$.  Suppose that
a cop occupies $c \in V(T)$ and the robber occupies $r \in V(G)$.  If
$v_1, v_2 \in V(T)$ belong to different components of $T-\{c\}$ and 
$d(c,v_1) > d(r,v_1)$, then $d(c,v_2) < d(r,v_2)$.
\end{lem}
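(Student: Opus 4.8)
### Proof Proposal

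\textbf{The setup.} Let $c \in V(T)$ be the cop's vertex and $r \in V(G)$ the robber's vertex. Let $v_1, v_2 \in V(T)$ lie in distinct components of $T - \{c\}$. Since $T$ is a tree and $v_1, v_2$ are separated by $c$, the unique path in $T$ from $v_1$ to $v_2$ passes through $c$; hence $d_T(v_1, v_2) = d_T(v_1, c) + d_T(c, v_2)$. Because $T$ is isometric in $G$, this equals $d_G(v_1, v_2)$, and also $d_T(v_i, c) = d_G(v_i, c)$ for $i = 1, 2$. So the plan is to run a triangle-inequality argument in $G$, using the tree's geodesic additivity through $c$ as the crucial rigidity.

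\textbf{The main computation.} Suppose, for contradiction, that $d_G(c, v_1) > d_G(r, v_1)$ and also $d_G(c, v_2) \geq d_G(r, v_2)$. First I would add these inequalities to get
\[
d_G(c, v_1) + d_G(c, v_2) > d_G(r, v_1) + d_G(r, v_2).
\]
The left side equals $d_G(v_1, v_2)$ by the additivity-through-$c$ observation above. The right side is at least $d_G(v_1, v_2)$ by the triangle inequality in $G$ (going from $v_1$ to $r$ to $v_2$). This forces $d_G(v_1, v_2) > d_G(v_1, v_2)$, a contradiction. Hence $d_G(c, v_2) < d_G(r, v_2)$, which is exactly the claim (after noting $d_T(c, v_2) = d_G(c, v_2)$). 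This also automatically handles the strictness in the conclusion: the strict inequality we assumed for $v_1$ feeds directly through the sum into the final strict inequality.

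\textbf{The key obstacle.} The only subtle point — and the step I would state carefully — is the identity $d_G(v_1, v_2) = d_G(c, v_1) + d_G(c, v_2)$. This is where both hypotheses ("$T$ is a tree" and "$T$ is isometric in $G$") are essential and interact: the tree structure guarantees that any $v_1$–$v_2$ path within $T$ routes through the cut vertex $c$, giving additivity of $d_T$; isometry then transfers each of the three distances $d_T(c,v_1)$, $d_T(c,v_2)$, $d_T(v_1,v_2)$ to the corresponding $d_G$ distance. Without isometry, a $G$-geodesic between $v_1$ and $v_2$ could bypass $c$ entirely and be much shorter, breaking the argument. I do not anticipate any genuine difficulty beyond making this identity explicit; the rest is the two-line triangle-inequality manipulation above.
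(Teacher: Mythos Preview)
Your proof is correct and follows essentially the same route as the paper's: assume $d(c,v_2) \geq d(r,v_2)$ for contradiction, combine the additivity $d(v_1,v_2) = d(v_1,c) + d(c,v_2)$ (from $c$ separating $v_1$ and $v_2$ in the isometric tree) with the triangle inequality through $r$, and derive $d(v_1,v_2) > d(v_1,v_2)$. The paper compresses this into a single displayed chain of inequalities, but the content is identical to what you wrote.
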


\begin{proof}
If, contrary to the conclusion, $d(c,v_2) \geq d(r,v_2)$, then 
\[d(v_1, v_2) = d(v_1,c) + d(c,v_2) > d(v_1,r) + d(r,v_2) \geq d(v_1,v_2),\]
which is a contradiction.
\end{proof}

\begin{lem} \label{L:move_closer}
Suppose that $T$ is an isometric subtree of a graph $G$.  Suppose
that a cop occupies $c_0 \in V(T)$ and the robber occupies $r \in V(G)$.
Suppose that $v_0 \in V(T)$ and $d(c,v_0) > d(r,v_0)$.  Let $c_1$ be the 
unique vertex in $T$ adjacent to $c_0$ and closer to $v_0$.  If the cop
moves to $c_1$, then

1. If $v \in T$ belongs to a different component of $T-\{c_0\}$ than
$v_0$, then $d(c_1, v) \leq d(r, v)$.

2. If $v \in T$ belongs to the same component of $T-\{c_0\}$ as $v_0$, then
$d(c_1, v) = d(c_0, v) - 1$.  
\end{lem}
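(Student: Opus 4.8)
The plan is to derive both statements from the tree structure of $T$ together with Lemma~\ref{L:same_component}. Throughout, I will use that $T$ is an isometric subtree, so that on $V(T)$ the ambient distance $d$ coincides with the intrinsic tree distance $d_T$; in particular there is a unique geodesic in $T$ between any two of its vertices, and deleting the vertex $c_0$ splits $T$ into components, each of which contains exactly one neighbor of $c_0$. The key preliminary observation is that $c_1$, being the second vertex on the geodesic in $T$ from $c_0$ to $v_0$, lies in the same component of $T-\{c_0\}$ as $v_0$ (note $v_0 \neq c_0$, since $d(c_0,v_0) > d(r,v_0) \ge 0$, so $c_1$ is well defined).

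For part 2, suppose $v$ lies in the same component of $T-\{c_0\}$ as $v_0$, hence in the same component as $c_1$. Since that component contains no neighbor of $c_0$ other than $c_1$, the geodesic in $T$ from $c_0$ to $v$ must begin with the edge $c_0c_1$, so $d(c_0,v) = 1 + d(c_1,v)$, which is precisely the asserted identity $d(c_1,v) = d(c_0,v) - 1$.

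For part 1, suppose $v$ lies in a component of $T-\{c_0\}$ different from the one containing $v_0$; then $c_1$ and $v$ lie in different components of $T-\{c_0\}$, so the geodesic in $T$ from $c_1$ to $v$ passes through $c_0$ and $d(c_1,v) = 1 + d(c_0,v)$. Now apply Lemma~\ref{L:same_component} with $v_1 := v_0$ and $v_2 := v$: these are in different components of $T-\{c_0\}$ and, by hypothesis, $d(c_0,v_0) > d(r,v_0)$, so the lemma yields $d(c_0,v) < d(r,v)$, i.e. $d(c_0,v) \le d(r,v) - 1$; combining gives $d(c_1,v) = 1 + d(c_0,v) \le d(r,v)$, as required. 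I do not expect a real obstacle here: the only points needing care are the bookkeeping with the components of $T-\{c_0\}$ (and the fact that each such component meets the neighborhood of $c_0$ in exactly one vertex) and the repeated, silent use of isometry to identify $d$ with $d_T$ on $V(T)$; once these are fixed, both claims follow immediately from the shape of geodesics in a tree.
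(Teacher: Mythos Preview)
Your proof is correct and follows essentially the same approach as the paper: for part~1 you invoke Lemma~\ref{L:same_component} to obtain $d(c_0,v) < d(r,v)$ and then use the triangle inequality (via $d(c_1,v) = 1 + d(c_0,v)$), and for part~2 you use the tree structure to see that the geodesic from $c_0$ to $v$ passes through $c_1$. Your write-up is in fact more careful than the paper's, making explicit the component bookkeeping and the use of isometry that the paper leaves implicit.
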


\begin{proof}
Suppose that the cop moves to $c_1$ as described above.  If $v \in V(T)$
is in a different component of $T-\{c_0\}$ than $v_0$, then, by
Lemma~\ref{L:same_component}, $d(c_0, v) < d(r, v)$.  Therefore,
$d(c_1, v) \leq d(r, v)$.  
If $v \in V(T)$ lies in the same component
of $T - \{c_0\}$ as $v_0$, then $d(c_1, v_0) = d(c_0, v_0) - 1$ because
$T$ is an isometric tree.  
\end{proof}

\begin{thm} \label{T:isometric_tree}
If $T$ is a finite isometric subtree of a graph $G$, then $T$ is 
$1$-guardable.
\end{thm}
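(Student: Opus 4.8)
The plan is to describe an explicit pursuit strategy for a single cop that terminates with the guarding condition $(GC)$ satisfied, using Lemmas~\ref{L:same_component} and~\ref{L:move_closer} as the engine. First I would place the cop at an arbitrary vertex $c_0 \in V(T)$ and let the robber occupy some $r \in V(G)$. If $(GC)$ already holds, we are done; otherwise there is a ``bad'' vertex $v_0 \in V(T)$ with $d(c_0, v_0) > d(r, v_0)$. By Lemma~\ref{L:same_component}, all such bad vertices lie in a single component of $T - \{c_0\}$ (since a bad vertex in one component forces every vertex of every other component to be ``good'', with strict inequality). So the cop has a well-defined direction in which to move: toward $v_0$, to the neighbor $c_1$ given by Lemma~\ref{L:move_closer}.

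Next I would track the effect of this single step. By part~(1) of Lemma~\ref{L:move_closer}, every vertex $v$ in a component of $T - \{c_0\}$ \emph{other} than the one containing $v_0$ now satisfies $d(c_1, v) \le d(r, v)$, i.e. becomes permanently good in the sense that it will remain good as the cop continues moving deeper into the $v_0$-component (moving away from $v$ only makes $d(\text{cop}, v)$ larger by at most $1$ per turn while the robber can also only increase $d(r,v)$ by at most $1$ — here one must be slightly careful, since the cop and robber alternate; the cleaner bookkeeping is that after the cop's move $(GC)$ holds for all such $v$, and then the robber moves, and then the cop moves again, so I need the invariant to be robust to one robber move followed by one cop move). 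Part~(2) says the cop strictly decreases its distance to every vertex in the $v_0$-component. Since $T$ is finite, the cop cannot keep decreasing these distances forever, so after finitely many steps there is no bad vertex in the (shrinking) target component either, and $(GC)$ holds everywhere.

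I expect the main obstacle to be making the inductive invariant precise enough to survive the alternation of turns, and in particular handling the ``hand-off'' when the set of bad vertices changes components after a robber move. The robber moves between the cop's moves, so the bad set at the start of the cop's next turn is computed against the robber's new position; I would argue that after the cop reaches $c_1$ and it becomes the robber's turn, the only way a previously-good vertex $v$ (in a non-target component) becomes bad is if the robber moved closer to the cop along $T$, i.e. toward $c_1$ — but then the robber has entered the part of $G$ that $(GC)$ protects and is captured on the cop's next move by the standard argument following the definition of $(GC)$. The right way to phrase this is: maintain the invariant that at the start of each cop turn, either $(GC)$ holds, or all bad vertices lie in one component of $T$ minus the cop's vertex and the cop's minimum distance to that component's vertices has strictly decreased since the last ``direction change''; a potential-function argument (sum of distances from the cop to the finitely many vertices of $T$, or just the distance to the current target) then shows the process terminates. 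Once $(GC)$ holds, the cop stays in $T$ and can maintain it by always moving toward any vertex that would otherwise become bad, which is exactly the maintenance guaranteed by Lemma~\ref{L:move_closer}.
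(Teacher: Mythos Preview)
Your approach is essentially the paper's: move the cop toward the unique component of $T-\{c\}$ containing all $(GC)$-violating vertices, invoke Lemmas~\ref{L:same_component} and~\ref{L:move_closer} to show this bad component strictly shrinks, and appeal to finiteness of $T$ for termination, then maintain $(GC)$ by the same one-step chase. The paper handles your stated obstacle about the alternation of turns more cleanly than your capture-based sketch: whenever the bad component $U_i$ is nonempty, Lemma~\ref{L:same_component} gives a \emph{strict} inequality $d(c_i,v)<d(r,v)$ for every $v\notin U_i$, and that strictness absorbs one robber move, so the bad set after the robber's move is still contained in $U_i$---no potential function or ``robber enters $T$ and is captured'' argument is needed.
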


\begin{proof}
There are two parts to the argument.  First, we must show that a cop
can end her turn so that the guarding condition $(GC)$ holds for $T$.
Second, we must show that after any subsequent move by the robber, she
can move in $T$ so that $(GC)$ still holds after her move.

We may assume that the cop begins on a vertex $c_0 \in V(T)$.  Let 
$U_0$ be the set of vertices of $T$ which belong to the component of 
$T - \{c_0\}$ which contains all vertices of $T$ for which $(GC)$ fails. 
This is well-defined by Lemma~\ref{L:same_component}.  If $U_0 = \emptyset$, 
then the cop can pass and $(GC)$ holds.  Otherwise, the cop moves to the 
unique adjacent vertex $c_1 \in U_0$.  Let $U_1$ be defined analogously.
Lemma~\ref{L:move_closer} implies that $U_1$ is a proper subset of $U_0$.
If $U_1 = \emptyset$, then the first part is complete.  If 
$U_1 \neq \emptyset$ and the robber moves from $r_0$ to $r_1$, then 
Lemma~\ref{L:same_component} implies that the only vertices of $T$ which are 
closer to $r_1$ than to $c_1$ still belong to $U_1$.  The reason is that any 
vertex of $V(T) - U_1$ is farther from $r_0$ than from $c_1$.  Therefore,
this strategy can be continued.  Since $T$ is assumed to be a finite tree,
there is a $k \geq 0$ such that $U_k = \emptyset$.

Suppose that $(GC)$ holds and that the robber is to move.  Let $c_0$ be
the vertex occupied by the cop.  If the robber moves from $r_0$ to $r_1$ so
that $d(r_1,v_1) < d(c_0, v_1)$ for some vertex $v_1 \in V(T)$, then the
cop moves to $c_1$, following the same strategy as above.  Since 
$d(c_0,v_1) \leq d(r_0,v_1)$, we have that $d(c_1, v_1) \leq d(r_1,v_1)$.
Moreover, by Lemma~\ref{L:move_closer}, $(GC)$ holds.
\end{proof}

\section{Determining the exact cop number of $GP(n,k)$} \label{S:Exact}

The graphs below have cop number 2.

\begin{itemize}
\item $GP(6,2)$: choose two antipodal vertices on the \emph{outer rim}, 
i.e. the induced subgraph on $A$.  Up to symmetry there is only one safe 
starting position for $R$.  Then the cops can move to block all moves of 
$R$ with their next move.  
\item $GP(8,2)$: choose two antipodal vertices on the outer rim.  Up
to symmetry, there are three safe starting positions for $R$.  A 
case-by-case analysis establishes that two cops suffice.
\item $GP(n,1)$: place on cop on a vertex of the outer rim and place
the other cop on the adjacent vertex of the inner rim.  Both cops
move in opposite directions around their respective cycles.  
\item $GP(9,3)$ and $GP(12,3)$ have cop number 2; this has been verified
with a computer.
\end{itemize}

The only other candidates for having cop number two are those of the form
$GP(3k,k)$ or $GP(4k,k)$, where $k \geq 2$.  It has been verified with
a computer that $c(GP(12,4)) = 3$ and $c(GP(16,4)) = 3$.  

We have verified with a computer that three cops do not suffice for each 
of the graphs in Figure~\ref{F:cop4}; hence, by Theorem~\ref{T:GP4} these 
graphs have cop number four.

\begin{center}
\begin{figure} \label{F:cop4}
\begin{tabular}{|c|l|}
\hline
$n$& $k$\\
\hline
26& 10\\
27& 6\\
28& 6, 8\\
29& 8, 11, 12\\
31& 7, 9, 12, 13\\
32& 6, 7, 9, 12\\
33& 6, 7, 9, 14\\
34& 6, 10, 13, 14\\
35& 6, 8, 10, 13, 15\\
36& 8, 10, 14, 15\\
37& 6, 7, 8, 10, 11, 14, 16\\
38& 6, 7, 8, 11, 14, 16\\
39& 6, 7, 9, 11, 15, 16, 17\\
40& 6, 7, 9, 11, 12, 15\\
\hline
\end{tabular}
\caption{Above is a complete list of the generalized Petersen graphs $GP(n,k)$,
with $n \leq 40$, which have cop number four.}
\end{figure}
\end{center}

\begin{thm}
The cop number of $GP(n,3)$ is less than or equal to three.
\end{thm}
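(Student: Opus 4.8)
The plan is to use the machinery of Section~\ref{S:isometric}: exhibit a finite isometric subtree of $GP(n,3)$ that one cop can guard, and argue that guarding it splits the remaining graph into a piece on which two more cops win. Concretely, $GP(n,3)$ has an inner rim $B$ which, because $\gcd(n,3)$ is either $1$ or $3$, decomposes into one or three disjoint cycles of spokes-distance-3 steps. The outer rim $A$ is a single $n$-cycle, and each $b_i$ hangs off $a_i$ by a spoke. I would first treat the generic case $3 \nmid n$, where $B$ is a single $n$-cycle $b_0 b_3 b_6 \cdots$. Consider a short ``ladder segment'': a path in $A$ together with the spokes and the corresponding arc of $B$ it sees — more precisely, I would look for a subtree $T$ (for instance a spoke $a_i b_i$ together with a few outer-rim edges on each side, or a small ``H''-shaped piece) that is isometric in $GP(n,3)$. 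By Theorem~\ref{T:isometric_tree} one cop $C_1$ can permanently guard such a $T$.

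The key step is then a \emph{reduction}: once $C_1$ guards $T$, the robber is confined to $G \setminus T$ (entering $T$ means immediate capture). The point of choosing $T$ well is that $G$ minus (the vertices interior to) $T$ should be a graph of a much simpler type — ideally an outerplanar-like or ``interval'' graph, or a graph in which a second isometric path exists whose guarding finishes the confinement. In fact the cleanest route is to guard an isometric path $P$ in the outer rim $A$ with one cop as in Aigner--Fromme, and a second isometric subtree straddling a spoke with another cop, so that together they cut the cylinder $GP(n,3)$ into a bounded region; a third cop then chases the robber down in the bounded region. So the order of operations is: (1) identify the isometric tree(s)/path(s) $T_1, T_2$ and verify isometry using the explicit distance formula in $GP(n,3)$; (2) invoke Theorem~\ref{T:isometric_tree} to get cops $C_1, C_2$ guarding them; (3) show $G \setminus (T_1 \cup T_2)$ is a finite graph on which one more cop wins (e.g. it is a tree, or a path, or otherwise copwin), and assign $C_3$ there; (4) handle the special case $3 \mid n$ separately, where $B$ splits into three triangles' worth of cycles and the inner structure is different — here one must re-verify that the chosen subtrees remain isometric, or pick different ones adapted to the three-component inner rim (the small graphs $GP(9,3)$ and $GP(12,3)$ are already known to have cop number $2$, so only $n \ge 15$ with $3 \mid n$ needs the general argument).

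The main obstacle I expect is step (1): pinning down which subtree of $GP(n,3)$ is genuinely isometric. Distances in $GP(n,k)$ are delicate because a shortest path may leave the outer rim, travel along the inner rim (taking $\pm k$-steps), and return; a path or tree that looks geodesic within $A \cup (\text{spokes})$ can fail to be isometric because a shortcut through $B$ beats it. So the heart of the proof is a distance estimate — showing that for the chosen $T$, every pair of its vertices is connected by a shortest $G$-path lying in $T$ — and this is where the specific value $k = 3$ (making inner-rim steps short) is used to keep $T$ large enough to be useful while still isometric. Once the right $T$ is found and the confinement region is identified as copwin, the rest is bookkeeping via the already-proved guarding theorem.
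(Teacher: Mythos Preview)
Your plan is not a proof yet---you say so yourself---and the obstacle you flag in step~(1) is real: in the finite graph $GP(n,3)$ a candidate subtree can fail to be isometric because of wrap-around shortcuts, and its isometry will in general depend on $n$. More seriously, a single small tree cannot disconnect the finite graph, so you are forced into guarding two subgraphs and then arguing that the remaining region is cop-win; none of those steps is carried out.

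The paper sidesteps all of this by working not in $GP(n,3)$ but in the infinite cover $GP(\infty,3)$. There the subgraph on $\{a_1,a_2,a_3,b_1,b_2,b_3\}$ (the path $a_1a_2a_3$ with the three pendant spokes $a_ib_i$) is a tree, is isometric---there is no wrap-around to worry about, and one checks directly that e.g.\ $d(b_1,b_3)=4$ in both the tree and the ambient graph---and, crucially, its removal \emph{disconnects} $GP(\infty,3)$: every outer-rim edge and every inner-rim edge crossing from indices $\le 0$ to indices $\ge 4$ has an endpoint in this set. One cop in $GP(n,3)$ plays the projected moves of the lifted tree-guarding strategy (Theorem~\ref{T:isometric_tree}); the remaining two cops play the projected moves of the ``push the robber's index to $+\infty$'' strategy already established in Corollary~\ref{C:Force_right}. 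Since the robber in the cover is being forced rightward but cannot cross the guarded tree, he is captured, and this projects to a capture in $GP(n,3)$.

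So the idea you are missing is to combine the isometric-tree guarding with the covering-space machinery of Section~2 rather than trying to do an Aigner--Fromme style decomposition directly in the finite graph. This removes the dependence on $n$, eliminates the case split on $3\mid n$ versus $3\nmid n$, and requires only one guarded tree instead of two.
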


\begin{proof}
Guard the isometric tree in $GP(\infty,3)$ having vertex set 
$\{a_1,a_2,a_3,b_1,b_2,b_3\}$.  This disconnects $GP(\infty,3)$.  
One cop in $GP(n,3)$ plays the lifted strategy of guarding the
isometric tree in $GP(\infty,3)$.  The other two cops play the
lifted strategy of pushing the robber to the right (or left).
\end{proof}


\section{I-graphs} \label{S:I-graphs}

In this section, we use a similar lifting strategy to bound the cop 
number of connected I-graphs.  Given $n \geq 5$ and $0 < j, k < n/2$, 
the \emph{I-graph}, $I(n,k,j)$, is the 
graph with vertex set $\{a_0, a_1, \ldots, a_{n-1}, b_0, b_1,\ldots,
b_{n-1}\}$ and having edges of the form 
$(a_i, a_{i+j}), \ (a_i,b_i)$ and $(b_i,b_{i+k})$ for each 
$i=1,2,\ldots,n$ with indices read modulo $n$.  I-graphs, thus,
are similar to generalized Petersen graphs except that two parameters 
define the adjacencies. Examples 
include $I(7, 3, 2)$ and $I(8, 2, 3)$, as shown below.

\begin{equation*}
\includegraphics[scale=.15]{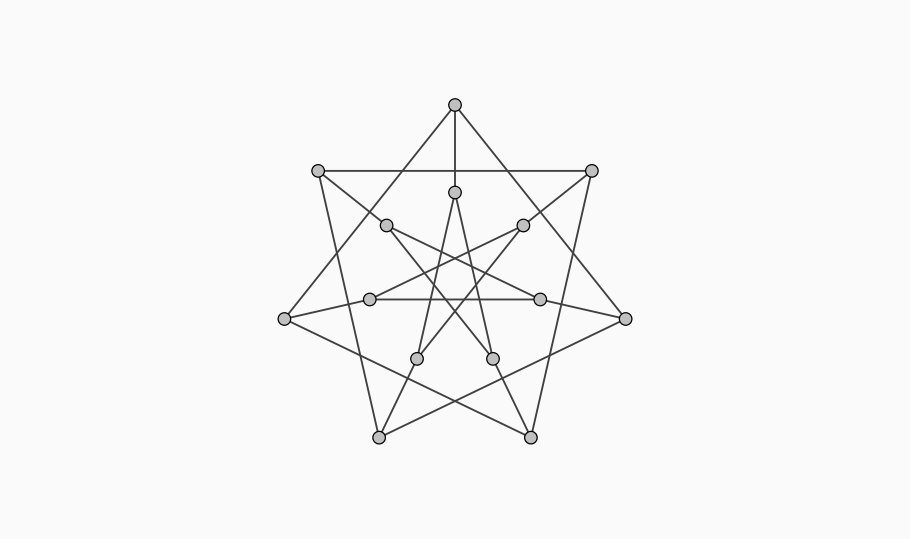} \hspace{1in} 
\includegraphics[scale=.15]{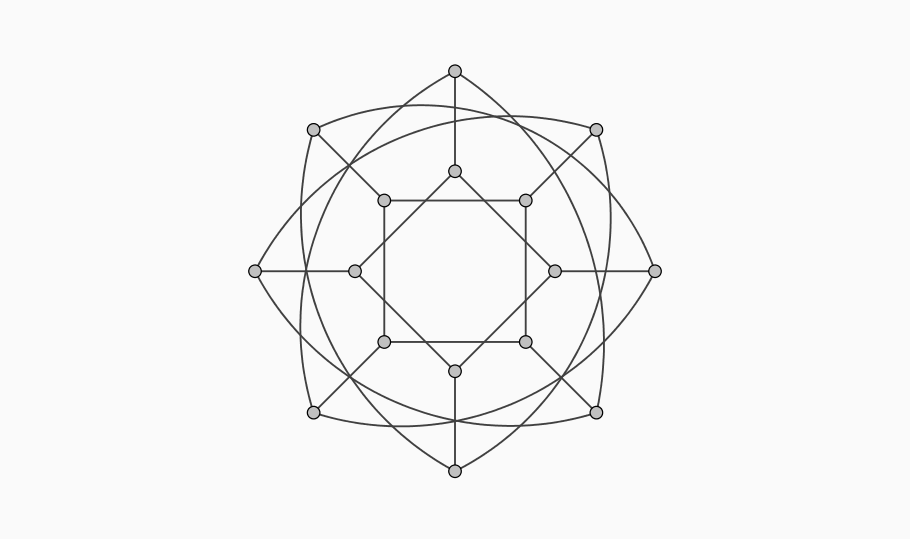}
\end{equation*}

By first examining the game played on the subset of connected I-graphs for 
which $k$ and $j$ are coprime (and bounding the cop number for this family 
of graphs) we are able to bound from above the cop number of all 
connected I-graphs.  To do so, we first define an infinite analogue of an 
I-graph for each $k, j \in \mathbb{N}$ as follows. Given such $k$ and $j$, 
let $A = \{a_i \ | \ i \in \mathbb{Z}\}$ and 
$B = \{b_i \ | \ i \in \mathbb{Z}\}$.  The infinite graph, 
$I(\infty, k, j)$ has vertex set $A \cup B$ and edges: 
$(a_i, a_{i+j}), \ (a_i, b_i), \ (b_i, b_{i+k})$ for each 
$i \in \mathbb{N}$.  Then there is a 
graph homomorphism $\pi:I(\infty,k,j) \to I(n,k,j)$ given by reducing 
indices modulo $n$. In an entirely analogous manner to the lifted game 
for generalized Petersen graphs, we define the lifted game played on 
I-graphs and their corresponding cyclic coverings; each move of a pawn 
in the finite I-graph corresponds to a move for its associated squad in 
the infinite graph, and vice versa. We can then show:

\begin{thm} \label{T:Igraph}
The cop number of a connected $I$-graph $I(n, k, j)$ 
is less than or equal to 5.
\end{thm}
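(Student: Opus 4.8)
The plan is to reduce the general connected $I$-graph to the coprime case, where the strategy of Sections 2–4 applies almost verbatim, and then handle the loss of connectivity incurred by passing to the coprime subgraph with one extra cop. First I would analyze connectivity: the $B$-vertices of $I(n,k,j)$ decompose into $\gcd(n,k)$ cycles under the edges $(b_i,b_{i+k})$, and the $A$-vertices into $\gcd(n,j)$ cycles under $(a_i,a_{i+j})$; the spokes $(a_i,b_i)$ glue these together, so $I(n,k,j)$ is connected precisely when $\gcd(n,j,k)=1$. Working in the infinite cover $I(\infty,k,j)$, the index of any pawn changes by $0$ or $\pm j$ when it moves among $A$-vertices, by $0$ or $\pm k$ among $B$-vertices, and by $0$ across a spoke; so after a pawn's move its index changes by an element of $\{0,\pm j,\pm k\}$, in particular its residue modulo $\gcd(j,k)$ is unchanged except when crossing between $A$ and $B$, exactly as in the Petersen case with $k$ replaced by $\gcd(j,k)$.

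Next I would run the two-squad argument of Theorem~\ref{T:Weak_cop_bound} and Corollary~\ref{C:Force_right} on $I(\infty,k,j)$ in the coprime case $\gcd(j,k)=1$: one squad maneuvers its lead cop to an index matching the robber's modulo $1$ (trivial) and then matching parity ($A$ versus $B$), and thereafter shadows the robber so that whenever the robber tries to lower (resp.\ raise) his index he must approach that cop, losing ground; the second squad pushes from the other side. As in Theorem~\ref{T:GP4}, four squads—two forcing the index up, two forcing it down—capture any lift of the robber, hence capture the robber in $I(n,k,j)$ when $\gcd(j,k)=1$. The only subtlety versus the Petersen case is the step-size: a $B$-move shifts the index by $\pm k$ and an $A$-move by $\pm j$, so "moving toward the robber'' must be reinterpreted as reducing the index-difference, and the finite-descent bookkeeping (each evasion event costs the robber a fixed positive amount) goes through with $2k$ replaced by an appropriate constant depending on $j,k$.

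Finally, for a general connected $I$-graph with $d:=\gcd(j,k)\ge 2$ (so necessarily $\gcd(n,d)=1$ by connectivity), the subgraph $I'$ spanned by the edges $(b_i,b_{i+k})$, $(a_i,a_{i+j})$ together with the spokes on a single residue class modulo $d$ is isomorphic to a connected $I$-graph with coprime parameters (rescale indices by $d$); its infinite cover embeds in $I(\infty,k,j)$, and four squads can play the coprime strategy inside this sub-cover. The remaining robber freedom is to leave $I'$ by a spoke into another residue class; but the spokes $(a_i,b_i)$ form an isometric matching-type structure, and I would use a fifth cop to guard a suitable finite isometric subtree of $I(\infty,k,j)$ (invoking Theorem~\ref{T:isometric_tree}) that separates the residue classes, forcing the robber to stay in the part where the four squads operate. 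I expect the main obstacle to be precisely this last step: verifying that there is a genuinely \emph{isometric} finite subtree of the infinite cover whose removal confines the robber to one coprime-type piece, and checking that the fifth cop's guarding is compatible with the other four squads' pushing moves without the robber slipping between pieces during the setup phase. If a single guarded subtree does not suffice, the fallback is to show directly that the coprime strategy, run on the sub-cover, already prevents the robber from ever crossing a spoke out of $I'$ because doing so would change his parity and hand a tempo to the shadowing cop, which would eliminate the need for the fifth cop on that family and leave only finitely many exceptional $(n,k,j)$ to check by the Bonato–Chiniforooshan algorithm.
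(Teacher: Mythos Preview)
Your coprime-case argument has a real gap at the step where you dismiss the residue condition as ``matching the robber's index modulo $1$ (trivial).''  In the generalized Petersen setting the mod-$k$ matching of Theorem~\ref{T:Weak_cop_bound} is not a global reachability condition; it is what allows the lead cop $C_1$ to shadow the robber \emph{within the $B$-subgraph}: two $B$-vertices are joined by a $B$-path if and only if their indices agree modulo $k$.  When $j=1$ the analogous mod-$j$ condition on $A$ is vacuous, which is why a single pursuing cop $C_2$ walking along $A$ forms a barrier.  In $I(\infty,k,j)$ with $j>1$ this fails: a cop at $a_m$ does not block a robber at $a_{m+1}$ from moving to $a_{m+1-j}$, because they lie on different $A$-cycles.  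Consequently the two-squad weak-cop argument does not carry over, and your four-squad conclusion for the coprime case is unjustified.  The paper's proof addresses exactly this by using two kinds of shadowing cops: $C_1,C_3$ match the robber's index modulo $k$ (so they can close in along $B$) and $C_2,C_4$ match modulo $j$ (so they can close in along $A$), one of each on either side of the robber.  A fifth cop is then needed because the robber can stall indefinitely by passing or repeatedly crossing a spoke, moves under which none of the four shadowing cops gain.

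Your reduction from the general connected case to the coprime case is also more complicated than necessary, and the proposed mechanism---guarding a finite isometric subtree to separate residue classes---is not substantiated (you yourself flag it as the main obstacle, and indeed no such separating subtree is produced).  The paper's route is much simpler: when $d=\gcd(j,k)>1$, the infinite cover $I(\infty,k,j)$ is disconnected, with components indexed by residues modulo $d$, each isomorphic to a coprime infinite $I$-graph.  Since every move changes a pawn's index by a multiple of $d$, the chosen lift $\widehat{R}$ of the robber can never leave its component, so one simply selects all five lead cops in that component and runs the coprime strategy there.  No fifth ``guarding'' cop and no isometric-subtree argument are needed for the reduction.
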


Before proving the main result of the section involving an arbitrary 
connected I-graph, $I(n,k,j)$, we reduce the problem to a simpler one, 
namely with $\gcd(k,j) = 1$. We prove the result for the special case and 
then extend the result to prove our general theorem. 

\begin{thm}
The cop number of a connected I-graph $I(n, k, j)$ with $k$ and $j$ 
coprime is less than or equal to 5.
\end{thm}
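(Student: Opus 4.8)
The plan is to mimic the strategy used for generalized Petersen graphs, replacing the role of ``index modulo $k$'' with the appropriate congruence data for an $I$-graph. First I would set up the infinite cyclic cover $\widehat{G} = I(\infty,k,j)$ of $G = I(n,k,j)$ together with its squads, exactly as in Section~2. The key structural observation is that in $I(\infty,k,j)$ the robber on an $A$-vertex can change his index only by $\pm j$ or $0$, and on a $B$-vertex only by $\pm k$ or $0$, while moving between the $A$-rim and $B$-rim does not change the index. Consequently the robber's index modulo $d := \gcd(j,k)$ is an invariant of the game, and since we have assumed $\gcd(j,k)=1$ this residue carries no information; so the relevant quantity to track is simply the robber's index as an integer, to be controlled by squads approaching from the left and from the right.

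Next I would establish an analogue of Theorem~\ref{T:Weak_cop_bound} and Corollary~\ref{C:Force_right}: a pair of squads can force the robber's index to increase without bound (and, by a symmetric choice of lead cops, a second pair can force it to decrease without bound). For one such pair, a lead cop $C_1$ first maneuvers along the $A$-rim (of step size $j$) to reach an index congruent to the robber's modulo $\gcd(j, \cdot)$ as needed, then matches ``parity'' with $R$ in the sense of occupying the same rim, then chases $R$ along whichever rim $R$ occupies — crucially, on the $B$-rim a move of $C_1$ ``toward $R$'' closes the gap, and each time $R$ tries to slip past by doubling back the squad simply reselects a lead cop on the far side, of which there are infinitely many since the fiber over any vertex is $\{qn + I : q \in \mathbb{Z}\}$. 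Running the bookkeeping argument of Theorem~\ref{T:Weak_cop_bound} shows $R$ can only fail to increase his index finitely often. With two such pairs pinning the robber from both sides in $\widehat{G}$, four squads suffice to capture a lift of $R$, and $\pi$ projects this to a capture in $G$ — giving cop number at most $4$ in this coprime case; but I would keep a fifth cop in reserve (used below) and state the bound as $5$ to match the theorem, or sharpen to $4$ if the chase argument goes through cleanly.

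The main obstacle I anticipate is the first stage of the $C_1$ strategy: achieving a useful ``index congruence'' and then maintaining it. In $GP(n,k)$ the outer rim is a single $n$-cycle, so a cop walking along $A$ changes his index by exactly $\pm 1$, which makes matching residues modulo $k$ easy. In $I(\infty,k,j)$ the $A$-rim is a disjoint union of bi-infinite paths (one for each residue class modulo $j$ in the infinite cover), so a lead cop cannot change his $A$-index by $1$; he moves in steps of $j$. Thus the reduction to $\gcd(j,k)=1$ is what makes the argument work: it guarantees that by composing $A$-moves (step $j$) and $B$-moves (step $k$) along a single squad's trajectory, the lead cop can reach an index differing from the robber's by any prescribed amount, in particular can get ahead of or behind him on whichever rim $R$ currently occupies. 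I would isolate this as a lemma: \emph{in $I(\infty,k,j)$ with $\gcd(j,k)=1$, for any target residue and any target side, a squad can position a lead cop with index equal to the robber's index plus any desired sufficiently large positive (resp.\ negative) multiple of $jk$, on the same rim as the robber}, proved by an explicit $O(j+k)$-move routine together with the infinitude of the fiber. Once this lemma is in hand the rest is the same force-to-an-end argument as before.

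Finally, I would remark on the source of the extra fifth cop relative to the $GP$ case: the $A$-rim of $I(\infty,k,j)$, unlike that of $GP(\infty,k)$, is generally disconnected, so cutting $\widehat{G}$ into a bounded region may require guarding a tree that meets several components of the $A$-rim, and a separate guarding cop (justified by Theorem~\ref{T:isometric_tree}) may be needed to confine the robber to one of them before the four pushing squads finish the job; this is precisely the mechanism that raises the bound from $4$ to $5$ and that I would spell out in the full proof.
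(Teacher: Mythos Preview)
Your plan has a genuine gap: the two-squad ``force right'' argument from $GP(\infty,k)$ does not port to $I(\infty,k,j)$ in the way you describe.  In the $GP$ proof the second cop $C_2$ simply walks along the $A$-rim with step size~$1$, so her index increases every single turn no matter what the robber does; in particular, when the robber passes or switches rims (moves $a_r\leftrightarrow b_r$, index unchanged), $C_2$ still gains.  In $I(\infty,k,j)$ there is no step-$1$ move: a cop on the $A$-rim moves in steps of $j$, and if she is also required to stay on the robber's rim she must follow each rim-switch and so makes no index progress on those turns.  The robber can therefore oscillate $a_r\to b_r\to a_r\to\cdots$ indefinitely and neither of your two left-side cops closes distance.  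Your positioning lemma (reaching a prescribed index offset at the start) does not address this, because the problem is in the \emph{chase}, not the setup; and reselecting lead cops after the robber ``slips past'' gains nothing here since no cop has closed in the meantime.

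The paper handles this differently.  It places \emph{four} mirroring cops---two with index $\equiv\widehat R\pmod k$ and two with index $\equiv\widehat R\pmod j$, one of each on either side of $\widehat R$---so that any move of $\widehat R$ \emph{within a rim} lets exactly one of them close while the others maintain.  The oscillation/passing loophole is then closed not by an isometric-tree guard but by a fifth cop whose sole job is to walk up to $\widehat R$ and prevent him from passing or switching rims forever.  Your proposed role for the fifth cop (guarding a tree to cope with the disconnected $A$-rim in the cover) is not what is needed; connectivity of the cover is used only to let $C_5$ reach the robber, and the disconnectedness of the $A$-rim is already absorbed into the mod-$j$/mod-$k$ bookkeeping.
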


\begin{proof}

Let $\widehat{I} = I(\infty, k, j)$ and $I = I(n, k, j)$ with 
$\pi:\widehat{I} \to I$ the associated projection map; let $R$ be the 
robber player on $I$. We describe a strategy in which five squads of cops 
capture one member of $\pi^{-1}(R)$ playing in $\widehat{I}$; then five 
cops can play on $I$, following the projected moves of their corresponding 
squads in $\widehat{I}$ can capture $R$ in a finite number of moves on 
$I$. That is, the strategy of the five squads in $\widehat{I}$ to capture 
a single member of $\pi^{-1}(R)$ will correspond to the five cops' capture 
of $R$ playing on $I$.

Fix one member, $\widehat{R}$ of $\pi^{-1}(R)$ and denote five squads 
$S_1, \ldots S_5$ of cops playing in $\widehat{I}$ with squad leaders 
$C_1, \ldots, C_5$, respectively. Firstly, $C_1$ and $C_2$ move along 
$\pi^{-1}(A)$, with $C_1$ increasing his index by $j$ with each move, 
and $C_2$ decreasing his by $j$ each move. Since $k$ and $j$ are assumed 
to be coprime, one of $C_1$ and $C_2$ will--- in less than 
$\max\{k,j\}/2$ moves--- obtain a congruent index, modulo 
$k$ with $\widehat{R}$. Without loss of generality, suppose $C_1$ 
accomplishes this first. On the next move, $C_1$ can move onto the same 
subgraph as $\widehat{R}$, and will, on subsequent turns, move so as 
to maintain an index congruent, modulo $k$ to that of $\widehat{R}$ and 
to stay on the same subgraph as $\widehat{R}$. Next, $C_2$ and $C_3$ move 
along $\pi^{-1}(B)$, one increasing and the other decreasing index. By 
symmetric reasoning, one of $C_2$ and $C_3$ can match index, modulo $j$ 
with that of $\widehat{R}$. Suppose, without loss of generality, that 
$C_2$ does so first. Then $C_2$ follows an analogous mirroring strategy 
as $C_1$. We can repeat this process twice more, so that (up to relabeling 
squads), $C_1$ and $C_3$ are on vertices which are congruent modulo 
$k$ with that of $\widehat{R}$; and $C_2$ and $C_4$ are on vertices 
congruent modulo $j$ with that of $\widehat{R}$.  Possibly reassigning 
squad leaders, we can also assume that the indices of $C_1$ and $C_2$ 
are strictly less than that of $\widehat{R}$, which is in turn strictly 
less than the indices of $C_3$ and $C_4$.

Once this positioning is obtained, with every move of $\widehat{R}$
within $\pi^{-1}(A)$ or $\pi^{-1}(B)$, 
three of $C_1, C_2, C_3$, and $C_4$ can maintain distance to 
$\widehat{R}$ while maintaining appropriate indices and remaining on the 
same subgraph as $\widehat{R}$. Further, \emph{one} of $C_1, C_2, C_3$, 
and $C_4$ can reduce distance to the robber while maintaining an 
appropriate index and staying on the same subgraph as $\widehat{R}$.  
If the robber does not pass on each turn
or continually switch subgraphs,
he will therefore eventually be captured by one of 
$C_1, C_2, C_3$, or $C_4$. 
Since $\widehat{I}$ is connected,
$C_5$ can move on 
to ensure that the robber does not indefinitely pass
or continually switch subgraphs, 
which forces capture on $\widehat{I}$ and describes a corresponding 
winning strategy for five cops playing on $I$.
\end{proof}

Boben, Pisanski, and \v{Z}itnik proved that the I-graph $I(n,k,j)$ is 
connected if and only if 
$\gcd(n,k,j) = 1$~\cite{Boben_Pisanski_Zitnik_MR2221849}.
Thus, to obtain Theorem~\ref{T:Igraph}, we may simply reason as follows.  
If $I(n, k, j)$ is a connected $I$-graph, then $\gcd{(n, k, j)} = 1$. 
Therefore, in the case for which we have $\gcd{(k, j)} > 1$, we may select 
our five lead cops to be in the same connected component of $\widehat{I}$ 
as our fixed $\widehat{R}$.  Their same strategy of Theorem 2 in the 
lifted game follows and projects down to a capture on the finite graph 
$I(n, k, j)$.

\bibliography{gen_petersen_arxiv}{}
\bibliographystyle{plain}

\end{document}